\newtheorem{theorem}{Theorem}[section]
\newtheorem{lemma}[theorem]{Lemma}
\newtheorem{proposition}[theorem]{Proposition}
\theoremstyle{definition}
\newtheorem{definition}[theorem]{Definition}
\theoremstyle{remark}
\newtheorem{remark}[theorem]{Remark}
\numberwithin{equation}{section}
\newcommand\N{\mathbb{N}}
\newcommand\R{\mathbb{R}}
\newcommand\Z{\mathbb{Z}}
\def\sideremark#1{\ifvmode\leavevmode\fi\vadjust{\vbox
to0pt{\vss \hbox to 0pt{\hskip\hsize\hskip1em
\vbox{\hsize2cm\tiny\raggedright\pretolerance10000
\noindent#1\hfill}\hss}\vbox to8pt{\vfil}\vss}}}
\begin{document}
\setcounter{page}{1}

\title[Upper Beurling Density of Translates in $L^p(\R^d)$]{Upper Beurling Density
of Systems formed by Translates of finite Sets of Elements in $L^p(\R^d)$}

\author[B. Liu, R. Liu]{Bei Liu$^1$ and Rui Liu$^2$$^{*}$}

\address{$^{1}$ Department of Mathematics, Tianjin University of Technology, Tianjin
300384, P.R. China.}
\email{\textcolor[rgb]{0.00,0.00,0.84}{beiliu1101@gmail.com}}

\address{$^{2}$ Department of Mathematics and LPMC, Nankai University,
Tianjin 300071, P.R. China.}
\email{\textcolor[rgb]{0.00,0.00,0.84}{ruiliu@nankai.edu.cn}}

%\dedicatory{This paper is dedicated to Professor ABCD}

\subjclass[2010]{Primary 42C30; Secondary 46E30, 46B15.}

\keywords{$(C_q)$-system, $L^p$-space, $p'$-Bessel sequence,
translate, upper Beurling density.}

\thanks{$^{*}$ Corresponding author.
\newline \indent This work was partially done while the second author was visiting the University of Texas at Austin and Texas A\&M University, and in the Linear Analysis Workshop at Texas A\&M University which was funded by NSF Workshop Grant. Both authors are also supported by the National Natural Science Foundation of China 11001134 and 11126250, the Fundamental Research Funds for the
Central Universities, and the Tianjin
Science \& Technology Fund 20100820.}

\begin{abstract}
In this paper, we prove that if a finite disjoint union of
translates $\bigcup_{k=1}^n\{f_k(x-\gamma)\}_{\gamma\in\Gamma_k}$ in
$L^p(\R^d)$ $(1<p<\infty)$ is a $p'$-Bessel sequence for some $1<
p'<\infty$, then the disjoint union $\Gamma=\bigcup_{k=1}^n
\Gamma_k$ has finite upper Beurling density, and that if
$\bigcup_{k=1}^n\{f_k(x-\gamma)\}_{\gamma\in\Gamma_k}$ is a
$(C_q)$-system with $1/p+1/q=1$, then $\Gamma$ has infinite upper
Beurling density. Thus, no finite disjoint union of translates in
$L^p(\R^d)$
can form a %$p'$-Bessel sequence for any $1\le p'<\infty$ satisfying
%the lower $p$-frame condition.
$p'$-Bessel $(C_q)$-system for any $1< p'<\infty$. Furthermore, by using
techniques from the geometry of Banach spaces, we obtain that, for
$1<p\le2$, no finite disjoint union of translates in $L^p(\R^d)$ can
form an unconditional basis.
\end{abstract} \maketitle

\section{Introduction}
%Classical (Hilbert) frames \cite{DS} are very useful in several areas of applied mathematics, including signal processing and image processing,
%as well as areas of pure mathematics, such as operator theory \cite{HL,HLLL} and Banach space theory \cite{CHL}. For more
%information on standard frames in Hilbert spaces, please see \cite{Ca,Ch2}.
%In 2001, Aldroubi, Sun and Tang \cite{AST} introduced the concept of $p$-frames to obtain series expansions in shift-invariant subspace of $L^p(\R)$. In fact, $p$-frames are the generalization of Hilbert frames. Furthermore,
%the definition of $p$-frame can be naturally extended to Banach spaces \cite{CS,CCS}. For more information and recent development on frame theory for Banach spaces, please see \cite{CDOSZ,CL,Li1,LZ,Su,CLS,Li2,BDL}.

%Gabor systems and wavelet systems which form frames for $L^2(\R^d)$ have a wide variety of application.
%Christensen, Deng, Heil, Kutyniok Sun and Zhou []give some density condition on $f$ and $\Gamma$ to
%be frames.

%On the density problem of translates, %for translation system $\bigcup_{k=1}^r\{f_k(x-a)\}_{a\in\Gamma_k} $ in
%$L^p(\R^d)$ to be $p$-bessel sequence and \emph{$(C_q)$-system} in this paper.
Given $1<p<\infty$ and $\gamma\in\R^d$, we define the translation
operator $T_\gamma$ on $L^p(\R^d)$ by $(T_\gamma f)(x)=f(x-\gamma)$
for all $x\in\R^d.$ If $\Gamma\subset\R^d,$ then the collection of
translations of $f\in L^p(\R^d)$ along $\Gamma$ is defined to be
$T_p(f,\Gamma)=\{T_\gamma f\}_{\gamma\in\Gamma}.$ Our main focus
shall be on the upper Beurling density of such $\Gamma$, the
disjoint union $\bigcup_{k=1}^n\Gamma_k$, given that
$\bigcup_{k=1}^n T_p(f_k,\Gamma_k)$ has some additional structure in
$L^p(\R^d).$ The ``additional structure'' takes two forms:
$\bigcup_{k=1}^n T_p(f_k,\Gamma_k)$ is a $p'$-Bessel sequence or is
a $(C_q)$-system.

The nature of $T_p(f,\Gamma)$ has been studied in a number of
papers \cite{Wi,AOl,ER,Ol}, mainly using techniques of harmonic
analysis. Our techniques will come partially from the geometry of
Banach spaces. Recall that, in 1992, Olson and Zalik \cite{OZ}
proved that there do not exist any Riesz bases for $L^2(\R)$
generated by $T_2(f,\Gamma)$. Then Christensen \cite{Ch1}
conjectured that there are no frames for $L^2(\R)$ of the form
$\bigcup^n_{k=1} T_2(f_k,\Gamma_k)$. In 1999, Christensen, Deng and
Heil \cite{CDH} proved this conjecture by studying density of
frames. For more density theorems, please see the research survey
\cite{He}. Recently, Odell, Sari, Schlumprecht and Zheng \cite{OSSZ}
used techniques largely from the geometry of Banach spaces to consider
the closed subspace of $L^p(\R)$ generated by translates of one
element in $L^p(\R)$.
%, and there
%are no unconditional bases formed by translates of one element in
%$L^p(\R)$ for $1\le p\le 4$.
%as follows.
%\begin{theorem}\cite{CDH} For each $k=1,...,r,$ chose a nonzero function $g_k\in
%L^2(\R^d)$ and an arbitrary sequence $\Gamma_k\subset \R^d$. Let
%$\Gamma$ be the disjoint union of $\Gamma_1,...,\Gamma_r.$
%\begin{enumerate}
%\item[(a)] If $\bigcup_{k=1}^r T_2(g_k,\Gamma_k)$ possesses an upper
%frame bound for $L^2(\R^d)$, then $D^+(\Gamma)<\infty$.
%\item[(b)] If $\bigcup_{k=1}^r T_2(g_k,\Gamma_k)$ possesses a lower
%frame bound for $L^2(\R^d)$, then $D^+(\Gamma)=\infty$.
%\end{enumerate}
%
%Thus, there are no frames for $L^2(\R^d)$ of the form
%$\bigcup_{k=1}^r T_2(g_k,\Gamma_k)$.
%\end{theorem}

In Section 2, we extend the concept of $(C_q)$-system from Hilbert
spaces to reflexive Banach spaces and give our basic Lemma
\ref{lem:5} and examples in $L^p(\R^d)$.

In section 3, by using techniques in \cite{CDH,OSSZ}, we prove that
if $\bigcup_{k=1}^n T_p(f_k,\Gamma_k)$ in $L^p(\R^d)$ $(1<p<\infty)$
is a $p'$-Bessel sequence for some $1< p'<\infty$, then the disjoint
union $\Gamma=\bigcup_{k=1}^n \Gamma_k$ has finite upper Beurling
density, and that if $\bigcup_{k=1}^n T_p(f_k,\Gamma_k)$ is a
$(C_q)$-system with $1/p+1/q=1$, then $\Gamma$ has infinite upper
Beurling density.
%the upper Beurling density of $\Gamma=\bigcup_{k=1}^r\Gamma_k$
%satisfies $D^+(\Gamma)<\infty$, and that if
%$\bigcup_{k=1}^r\{f_k(x-a)\}_{a\in\Gamma_k}$ is a $(C_q)$-system
%with $1/p+1/q=1$, then $D^+(\Gamma)=\infty$.
Thus, no collection $\bigcup_{k=1}^n T_p(f_k,\Gamma_k)$ of pure
translates can form a $p'$-Bessel $(C_q)$-system in $L^p(\R^d)$ for
any $1< p'<\infty$. This extends the Christensen/Deng/Heil density
result in \cite{CDH} from classical (Hilbert) frames in $L^2(\R^d)$
to more general $p'$-Bessel $(C_q)$-systems in $L^p(\R^d)$.

In the last section. by using techniques from the geometry of Banach spaces,
we obtain that there is no unconditional basis of $L^p(\R^d)$ of
the form $\bigcup_{k=1}^n T_p(f_k,\Gamma_k)$ for $1<p\le 2$. It
partially extends the latest work \cite{OSSZ} on uniformly separated
translates of one element in $L^p(\R)$. The extension is to higher
dimensions, to multiple generating functions, and to completely
arbitrary sets of translates.
%In 1922, Grochenig \cite{Gr} first generalized Hilbert frames to Banach spaces and introduced atomic decompositions and Banach frames. Han and Larson \cite{HL} defined a Schauder frame for a Banach space to be a compression of a Schauder basis for a Banach space. In \cite{CHL}, Casazza,
%Han and Larson gave and studied various definitions of frames for Banach spaces including the Schauder frame. In 2009, Casazza, Dilworth, Odell, Schlumprecht and Zs$\acute{\textrm{a}}$k \cite{CDOSZ} studied the coefficient quantization for Schauder frames in Banach spaces. Concentrating
%on Schauder frames independent of the associated bases,
%the author \cite{Liu} gave out the concepts of minimal and maximal associated bases with
%respect to Schauder frames, closely connected them with the
%duality theory, and extended known James' results \cite{Ja} on
%unconditional bases to unconditional frames. In \cite{LZh}, the author and Zheng gave an characterization of Schauder frames which are near-Schauder bases, which generalized Holub's results \cite{Ho} from Hilbert frames to Schauder frames.
%
%
%In last section, we introduce a new concept of weak Schauder frames for Banach spaces, which is an essential
%generalization of Schauder frames. Then we will extend
%\begin{theorem} For each
%$k=1,...,n,$ choose a nonzero function $f_k\in L^1(\R^d)$ and an
%arbitrary sequence $\Gamma_k\subset\R^d$. Let $\Gamma$ be the
%disjoint union of $\Gamma_1,...,\Gamma_n$. Then $\bigcu
\section{Preliminaries and notation}
%Now we describe the families of systems that we consider in this work.
%\subsection{Weak $\ell_p$ sequences and weak lower $\ell_p$ condition}
%
%A sequence $(x_i)$ in a Banach space $X$ is a weak $\ell_p$ sequence if
%$\sum |f(x_i)|^p<\infty$ for all $f$, which is equivalent to that there is a constant
%$c$ such that $\sum |f(x_i)|^p\le c\|f\|^p$. A sequence $(x_i)$
%satisfies weak lower $\ell_p$ condition if there is a constant
%$c$ such that $\sum |f(x_i)|^p\ge c\|f\|^p$.
%\subsection{Beurling Density}

%Now we describe the families of systems that we consider in this work.
%A family of elements $\{f_k\}_{k=1}^{\infty}\subset\mathcal{H}$ is a \emph{(Hilbert) frame} for a separable Hilbert space $\mathcal{H}$
%if there exist positive constants $A$ and $B$ such that
%\[A \|f\|^{2}\leq\sum_{k=1}^{\infty}|\langle f,f_k\rangle|^{2}\leq B \|f\|^{2}
%\ \ \ \mbox{ for all }\, f\in\mathcal{H}.\]
%The number $A$ and $B$ are called the lower and upper frame bounds, respectively.
%If at least the right-hand inequality holds, $\{f_k\}_{k=1}^{\infty}$ is called a
%\emph{Bessel sequence} with bound $B$.
%For the introduction to the theory of (Hilbert) frames we refer the reader to
%\cite{Ca,Ch2}.
In 2001, Aldroubi, Sun and Tang \cite{AST} introduced the concept of
$p$-frame in $L^p(\R)$, which is a generalization of classical
(Hilbert) frames \cite{DS,Ca,Ch2} and can be naturally extended to
Banach spaces \cite{CS,CCS}.
\begin{definition}
Let $X$ be a separable Banach space and $1< p<\infty$. A family
$\{f_k\}_{k=1}^{\infty}\subset X$ is a \emph{$p$-frame} for $X^*$ if
there exist constants $A,B>0$ such that
$$
A\|h\|^p\le \sum_{k=1}^{\infty} |\langle h, f_k\rangle|^p\le
B\|h\|^p \ \ \mbox{ for all }\, h\in X^*.
$$
The number $A$ and $B$ are called the lower and upper p-frame
bounds. The sentence $\{f_k\}_{k=1}^{\infty}$ is a \emph{$p$-Bessel sequence} if the
right-hand side inequality holds. We say that
$\{f_k\}_{k=1}^{\infty}$ is a \emph{Bessel sequence} if it is a
$2$-Bessel sequence.
\end{definition}
%Notice that when $X$ is a Hilbert space and $p=2,$ then $\{f_k\}_{k=1}^{\infty}$ is a standard (Hilbert) frame.
In 2007, S. Nitzan and A. Olevskii introduced the concept of
$(C_q)$-system in Hilbert spaces \cite{NO1,NO2,NO3}. It is a weaker
form of the frame-type condition, which is a relaxed version of this
inequality:
\[A\|f\|^{2}\leq\sum^{\infty}_{k=1}|\langle f,f_k\rangle|^{2}\ \ \ \mbox{ for all }\, f\in\mathcal{H}.\]
Now we extend this useful definition of $(C_q)$-system to reflexive
Banach spaces.
\begin{definition}\label{def:1}
Let $X$ be a separable reflexive Banach space and $1<q<\infty$ be a
fixed number. We say that a sequence of
$\{f_k\}_{k=1}^{\infty}\subset X$ is a \emph{$(C_q)$-system in $X$
with constant $C>0$ (complete with $\ell_q$ control over the
coefficients)} if for every $f\in X$ and $\varepsilon>0,$ there exists a
linear combination $g=\sum a_kf_k$ such that
\begin{equation}\label{eq:5}\|f-g\|_X<\varepsilon\ \ \ \ \mbox{and}\ \ \ \Big(\sum|a_n|^q\Big)^{1/q}\le C\|f\|_X,
\end{equation}
where $C=C(q)$ is a positive constant not depending on $f$.
\end{definition}
\begin{remark}By Proposition \ref{th:6} in Section 4, given $p,q\in(1,\infty)$
with $1/p+1/q=1$, we have that: if $1<p\le 2$, then every
seminormalized unconditional basis of $L^p(\R^d)$ is a $q$-Bessel
$(C_2)$-system; if $2\le p<\infty$, then every seminormalized
unconditional basis of $L^p(\R^d)$ is a Bessel $(C_p)$-system.
\end{remark}
We define some types of sequences in $\R^d$ and upper Beurling
density \cite{CDH,Ch2}.
%\begin{definition}
%Let $1<q,\tilde{q}<\infty$ be a fixed number.
%We say that a system $\{f_k\}_{k=1}^{\infty}\subset L^{\tilde{q}}(\R^d)$ is a $(C_q)$-system in $L^{\tilde{q}}(\R^d)$ (complete with $\ell_q$ control
%over the coefficients) if every $f\in L^{\tilde{q}}(\R^d)$ and $\varepsilon>0,$ there exist a linear combination
%$g=\sum a_kf_k$ such that
%$$\|f-g\|_{\tilde{q}}<\varepsilon\ \ \ \ \mbox{and}\ \ \ \Big(\sum|a_n|^q\Big)^{1/q}\le C\|f\|_{L^{\tilde{q}}},$$
%where $C=C(q)$ is a positive constant not depending on $f$.
%\end{definition}

%We now give some classical definitions related to the density of
%sequences $\Gamma=\{\gamma_i\}_{i\in I}$ in $\R^d$. The index set is
%countable, and since $\Gamma$ is regarded as a sequence, repetitions
%of elements of $\Gamma$ are allowed.

\begin{definition}
Let $\Gamma=\{\gamma_i\}_{i\in I}\subset\R^d.$
\begin{enumerate}
\item[$(i)$] A point $\gamma\in\R^d$ is an \emph{accumulation point} for $\Gamma$
if every open ball in $\R^d$ centered at $\gamma$ contains
infinitely many $\gamma_k.$
\item[$(ii)$] $\Gamma$ is \emph{$\delta$-uniformly separated} if $\delta=\inf_{i\neq j}|\gamma_i-\gamma_j|>0.$
The number $\delta$ is the \emph{separation constant}.
\item[$(iii)$] $\Gamma$ is \emph{relatively uniformly separated} if it is a finite
union of uniformly separated sequences $\Gamma_k$. That is to say
that $I$ can be partitioned into finite disjoint sets $I_1,...,I_n$
such that each sequence $\Gamma_k=\{\gamma_i\}_{i\in I_k}$ is
$\delta_k$-uniformly separated for some $\delta_k>0.$
\end{enumerate}
\end{definition}
For $h>0$ and $x\in\R^d,$ we define cube $Q_{h}(x)$ by
\[Q_{h}(x)=\prod^{d}_{i=1}[x_i-h/2,x_i+h/2), \ \ \mbox{ where }
x=(x_1, ... ,x_d).\] When $x=0,$ we use $Q_{h}$ instead of
$Q_{h}(0)$ for simplicity.
%For each $k\in\N,$ we use $I,I_k$ to denote the index sets and
%$\Gamma=\{\gamma_{i}\}_{i\in I},\Gamma_k=\{\gamma_{k,i}\}_{i\in I_k}$
%to denote sequences in $\R^d$. Notice that repetitions of points in $\Gamma_k$
%are allowed. we write $\Gamma,\Gamma_k\subset\R^d$ for simplicity which
% means that $\Gamma,\Gamma_k$ are sequences in $\R^d$ and
%not merely subsets of $\R^d$ \cite{CDH,Ch2}.
%Let $\Gamma=\bigcup_{k=1}^n\Gamma_k=\{\gamma_{k,i}\}_{i\in I_k,k=1,...,n};$
%then $\Gamma$ is the sequence obtained by amalgamating $\Gamma_1,...,\Gamma_n.$
%With loss of generality, we can assume that $\Gamma$ is the disjoint union of $\Gamma_1,...,\Gamma_n$.
%Notice that a relatively uniformly separated sequence can repeat the
%same point $N$ times for some $N\in\N$, but it can not have an
%accumulation point.
%Several important characterizations of relatively uniformly
%separated sequences are known. The most important one is in term of
%the upper Beurling density.
%\begin{definition}\label{def:1}
Let $\Gamma=\{\gamma_i\}_{i\in I}\subset\R^d$. For each $h>0$, let
$\nu_\Gamma^+(h)$ denote the largest number of points from $\Gamma$
that lie in any cube $Q_h(x)$, i.e.,
$$\nu_\Gamma^+(h)=\displaystyle{\sup_{x\in\R^d}\#(\Gamma\cap Q_h(x))}.$$ The
\emph{upper Beurling density} of $\Gamma$ is defined by
$$D^+(\Gamma)=\limsup_{h\rightarrow\infty}\frac{\nu_\Gamma^+(h)}{h^d}.$$
%\end{definition}

\begin{lemma}\label{lemma:1}
Let $\Gamma=\{\gamma_i\}_{i\in I}$ be a sequence in $\R^d$. Then the
following statements are equivalent.
\begin{enumerate}
\item[$(i)$] $D^+(\Gamma)<\infty.$
\item[$(ii)$] $\Gamma$ is relatively uniformly separated.
\item[$(iii)$] For some (and therefore every) $h>0$, there is a natural
number $N_h$ such that each cube $Q_h(hn)$, $n\in\Z^d$, contains at
most $N_h$ points from $\Lambda$. That is,
$$N_h=\sup_{n\in\Z^d}\#(\Lambda\cap Q_h(hn))<\infty.$$
\end{enumerate}
\end{lemma}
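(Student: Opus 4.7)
\bigskip
\noindent\textbf{Proof plan for Lemma \ref{lemma:1}.} The plan is to establish the cycle (i)$\Rightarrow$(iii)$\Rightarrow$(ii)$\Rightarrow$(iii)$\Rightarrow$(i), from which the ``some and therefore every'' clause in (iii) also follows: once I know (iii) for some $h$ implies (i), going back from (i) recovers (iii) for all $h$.

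For (i)$\Rightarrow$(iii), I would first note that $h\mapsto \nu_\Gamma^+(h)$ is non-decreasing, so if it were infinite for some $h_0$ it would be infinite for all larger $h$, making $D^+(\Gamma)=\infty$. Thus $D^+(\Gamma)<\infty$ forces $\nu_\Gamma^+(h)<\infty$ for every $h>0$, and the bound $N_h\le \nu_\Gamma^+(h)$ finishes this step. For (iii)$\Rightarrow$(i), given $h>0$ with $N_h<\infty$, I would cover an arbitrary cube $Q_H(x)$ by the $(\lfloor H/h\rfloor+2)^d$ lattice cubes $Q_h(hn)$ that meet it, obtaining $\nu_\Gamma^+(H)\le (\lfloor H/h\rfloor+2)^d N_h$, so $D^+(\Gamma)\le N_h/h^d<\infty$.

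For (ii)$\Rightarrow$(iii) I use a standard volume argument: if $\Gamma_k$ is $\delta_k$-uniformly separated, then the open balls of radius $\delta_k/2$ around the points of $\Gamma_k\cap Q_h(hn)$ are pairwise disjoint and sit inside $Q_{h+\delta_k}(hn)$, so their number is bounded by $(h+\delta_k)^d/C_d(\delta_k/2)^d$, a constant independent of $n$. Summing over $k=1,\dots,n$ gives a uniform bound $N_h<\infty$. The main obstacle is (iii)$\Rightarrow$(ii): naively partitioning the points of each cube $Q_h(hn)$ into $N_h$ classes fails, because two classes could have points in neighboring cubes that are arbitrarily close to the shared boundary. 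I would handle this with a parity coloring of $\Z^d$: split $\Z^d$ into the $2^d$ classes determined by parities of the coordinates, so that within a single class any two distinct lattice points $n,n'$ satisfy $\|n-n'\|_\infty\ge 2$, and consequently the cubes $Q_h(hn)$ and $Q_h(hn')$ project onto some axis into disjoint intervals whose gap is at least $h$.

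Within each color class $C$, I would enumerate the (at most $N_h$) points of $\Gamma\cap Q_h(hn)$ as $\gamma_{n,1},\dots,\gamma_{n,k_n}$ and, for each $j=1,\dots,N_h$, collect the $j$-th entries into a set $\Gamma_{C,j}$. By the parity argument any two distinct elements of $\Gamma_{C,j}$ lie in different cubes of $C$ and hence are at Euclidean distance at least $h$, so $\Gamma_{C,j}$ is $h$-uniformly separated. The $2^d N_h$ sets $\Gamma_{C,j}$ partition $\Gamma$, which is exactly the relative uniform separation required by (ii). This closes the cycle and also yields the ``some and therefore every $h$'' statement, since after passing through (iii)$\Rightarrow$(i) we can re-apply (i)$\Rightarrow$(iii) at an arbitrary $h$.
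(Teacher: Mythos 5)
Your proof is correct, and there is nothing in the paper to compare it against line by line: the paper states Lemma \ref{lemma:1} without proof, importing it from the literature (\cite{CDH,Ch2}), so your blind argument is in effect supplying the missing proof, and it does so along the standard route. The chain (i)$\Rightarrow$(iii)$\Rightarrow$(i) is handled correctly (monotonicity of $h\mapsto\nu_\Gamma^+(h)$ in one direction, the covering count $\nu_\Gamma^+(H)\le(\lfloor H/h\rfloor+2)^dN_h$ giving $D^+(\Gamma)\le N_h/h^d$ in the other), and the volume argument for (ii)$\Rightarrow$(iii) is routine. You correctly identified the only delicate step, (iii)$\Rightarrow$(ii), where the naive ``index the at most $N_h$ points within each cube'' partition fails for points near shared cube boundaries, and your parity coloring of $\Z^d$ fixes it: within one parity class, distinct lattice points differ by at least $2$ in some coordinate, so the corresponding half-open cubes are separated by a gap of at least $h$ in that coordinate, making each $\Gamma_{C,j}$ $h$-uniformly separated; the $2^dN_h$ sets $\Gamma_{C,j}$ do partition $\Gamma$ because the cubes $Q_h(hn)$ tile $\R^d$. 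Two cosmetic remarks only: the $\Lambda$ appearing in item (iii) of the statement is a typo for $\Gamma$, which you silently and correctly read as such; and in your (ii)$\Rightarrow$(iii) step the letter $n$ does double duty as the number of separated pieces and as the lattice index, so one of the two should be renamed in a written-out version.
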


\section{Main results}
%\section{Proof of Theorem \ref{th:1} (I)}

%Let $p,q\in(1,\infty)$, $1/p+1/q=1$, and $f\in L^p(\R^d)$. We denote
%by $f^*$ the dual of $f$, that is, $f^*\in L^q(\R^d)$,
%$\|f^*\|_q=\|f\|_p$, and $\langle f, f^*\rangle=\int_{\R^d}
%f(x)\cdot\overline{f^*(x)} \,\, dx=\|f\|^2_p.$ Actually, we have that
%$$f^*(x)=\|f\|_p^{1-p}\cdot\overline{f(x)}\cdot|f(x)|^{p-2} \ \
%\mbox{ for all } x\in\R^d \mbox{ and } f(x)\neq0$$ and $f^*(x)=0$
%when $f(x)=0$.

%First we need some lemmas.
First we need the following basic lemma.
\begin{lemma}\label{lemma:2} Let $\Gamma$ be a sequence in $\R^d$, and $1<p,q<\infty$
with $1/p+1/q=1$. Assume that $f\in L^p(\R^d)$, $\tilde{f}\in
L^q(\R^d)$ and $\langle f, \tilde{f}\rangle\neq 0$. If $\Gamma$ is
not relatively uniformly separated, then, for any
$0<\varepsilon<|\langle f, \tilde{f}\rangle|$, we have
$$\sup_{\beta\in\R^d}\#\{\gamma\in \Gamma:|\langle T_\gamma f, T_\beta \tilde{f}\rangle|
>\varepsilon\}=\infty.$$
\end{lemma}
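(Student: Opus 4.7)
The plan is to reduce the whole statement to a simple geometric/density fact about $\Gamma$ by introducing the cross-correlation function
\[
F(y)\;:=\;\langle f,\, T_y \widetilde f\rangle,\qquad y\in\R^d.
\]
A change of variables gives $\langle T_\gamma f,\, T_\beta \widetilde f\rangle = F(\beta-\gamma)$, so the condition $|\langle T_\gamma f, T_\beta \widetilde f\rangle|>\varepsilon$ becomes $|F(\beta-\gamma)|>\varepsilon$. Thus I only need to find, for each $N$, a point $\beta\in\R^d$ such that at least $N$ elements $\gamma\in\Gamma$ lie close to $\beta$ (close in the sense of $F$).

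First I would establish that $F$ is continuous on $\R^d$. This is a standard consequence of the strong continuity of the translation group on $L^q(\R^d)$ for $1<q<\infty$: the map $y\mapsto T_y\widetilde f$ is $\|\cdot\|_q$-continuous, and composing with the bounded linear functional $g\mapsto\langle f,g\rangle$ yields continuity of $F$. Since $F(0)=\langle f,\widetilde f\rangle$ has $|F(0)|>\varepsilon$, continuity gives a radius $\delta>0$ such that $|F(y)|>\varepsilon$ whenever $|y|<\delta$. Therefore, for any $\beta\in\R^d$ and any $\gamma\in\Gamma$ with $|\beta-\gamma|<\delta$, we automatically have $|\langle T_\gamma f,T_\beta\widetilde f\rangle|>\varepsilon$.

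Next I would invoke Lemma \ref{lemma:1}. Since $\Gamma$ is not relatively uniformly separated, condition (ii) fails, so (iii) fails as well: for \emph{every} $h>0$,
\[
\sup_{n\in\Z^d}\#\bigl(\Gamma\cap Q_h(hn)\bigr)=\infty.
\]
I would fix $h>0$ small enough that every point of $Q_h(0)$ has Euclidean norm strictly less than $\delta$ (e.g.\ $h<2\delta/\sqrt d$, so that the cube has diameter below $\delta$). Then choose, for each $N\in\N$, some $n=n_N\in\Z^d$ with $\#(\Gamma\cap Q_h(hn))\ge N$, and set $\beta=\beta_N:=hn$. Every $\gamma\in\Gamma\cap Q_h(hn)$ satisfies $|\beta-\gamma|<\delta$ and therefore $|\langle T_\gamma f,T_\beta \widetilde f\rangle|>\varepsilon$, yielding at least $N$ such $\gamma$'s. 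Letting $N\to\infty$ gives the desired supremum $=\infty$.

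The only genuine technical point is the continuity of $F$, and even that is routine for $1<p,q<\infty$; everything else is bookkeeping with the definitions of $T_\gamma$, the cubes $Q_h(x)$, and Lemma \ref{lemma:1}. I do not anticipate a serious obstacle, but I would be careful to state the geometric estimate $\mathrm{diam}(Q_h)=h\sqrt d$ explicitly so that the translation from ``same cube'' to ``within $\delta$'' is transparent.
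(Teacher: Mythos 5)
Your proof is correct and follows essentially the same route as the paper's: continuity of the correlation function at the origin yields a neighborhood on which $|\langle T_x f,\tilde f\rangle|>\varepsilon$, and the failure of condition (iii) of Lemma \ref{lemma:1} supplies, for every $N$, a cube of that fixed size containing at least $N$ points of $\Gamma$, whose center serves as $\beta$. The only cosmetic differences --- your $F(y)=\langle f,T_y\tilde f\rangle$ versus the paper's $x\mapsto\langle T_x f,\tilde f\rangle$, lattice cube centers $hn$ versus arbitrary centers $\beta$, and your parenthetical calling the radius bound $h\sqrt{d}/2<\delta$ a ``diameter'' bound (with $h<2\delta/\sqrt d$ the diameter is only $<2\delta$, but the center-to-point estimate you actually use is correct) --- do not affect the argument.
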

\begin{proof}
Consider the function $x \mapsto \langle T_x f, \tilde{f} \rangle$
for all $x\in\R^d.$ Since the function is continuous, for any
$0<\varepsilon<|\langle f, \tilde{f}\rangle|$ there is a cube $Q_h$
for some $h>0$ such that
$$\inf_{x\in Q_h} |\langle T_x f, \tilde{f}\rangle|
>\varepsilon.$$
Consider an arbitrary $N\in\N$, by Lemma \ref{lemma:1}, there is a
cube $Q_h(\beta)$ for some $\beta\in\R^d$, which contains at least
$N$ elements from $\Gamma$. Then for any $\gamma\in Q_h(\beta)$,
$\gamma-\beta\in Q_h$, we have $$|\langle T_\gamma f, T_\beta
\tilde{f}\rangle|= |\langle T_{\gamma-\beta} f,
\tilde{f}\rangle|>\varepsilon.$$ It follows that
$$\#\big\{\gamma\in \Gamma:|\langle T_\gamma f, T_\beta
\tilde{f}\rangle|>\varepsilon\big\}\ge \#\big(\Gamma\cap
Q_h(\beta)\big)\ge N.$$ Since $N\in\N$ is arbitrary, the conclusion
follows.
\end{proof}

For translate of one element, we get the following result.
\begin{proposition}\label{pp:1}
Let $1< p<\infty$, $f$ be a nonzero function in $L^p(\R^d)$, and
$\Gamma$ be a sequence in $\R^d$. If $T_p(f,\Gamma)$ is a
$p'$-Bessel sequence for some $1< p' <\infty$, then $\Gamma$ is
relatively uniformly separated.
\end{proposition}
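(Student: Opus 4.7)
The plan is to prove the contrapositive by contradiction: assuming $\Gamma$ is not relatively uniformly separated, I will produce, for arbitrary $N$, a single test function $h \in L^q(\R^d)$ whose Bessel-type sum exceeds $N$ times a fixed positive constant, contradicting the $p'$-Bessel bound.

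First I would unwind what the $p'$-Bessel hypothesis says in this setting. Since $\{T_\gamma f\}_{\gamma\in\Gamma} \subset L^p(\R^d)$, with $L^p(\R^d)^* = L^q(\R^d)$ under $1/p+1/q=1$, the definition gives a constant $B>0$ such that
\[
\sum_{\gamma\in\Gamma}|\langle h, T_\gamma f\rangle|^{p'}\le B\|h\|_q^{p'}\qquad \text{for every } h\in L^q(\R^d).
\]
Because $f\neq 0$ in $L^p(\R^d)$, I can pick a companion $\tilde{f}\in L^q(\R^d)$ with $\langle f,\tilde{f}\rangle\neq 0$ (e.g.\ any smooth compactly supported $\tilde{f}$ with $\int f\tilde{f}\neq 0$, or the normalized duality element $\tilde{f}=\|f\|_p^{1-p}|f|^{p-2}\bar f$). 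Fix any $\eps$ with $0<\eps<|\langle f,\tilde{f}\rangle|$.

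Now I would apply Lemma \ref{lemma:2}: since $\Gamma$ is assumed \emph{not} relatively uniformly separated, for every $N\in\N$ there exists $\beta_N\in\R^d$ with
\[
\#\{\gamma\in\Gamma : |\langle T_\gamma f, T_{\beta_N}\tilde{f}\rangle|>\eps\}\ge N.
\]
Testing the Bessel inequality with $h=T_{\beta_N}\tilde{f}\in L^q(\R^d)$, and using that translation is an isometry on $L^q$, I get
\[
N\eps^{p'}\le \sum_{\gamma\in\Gamma}|\langle T_{\beta_N}\tilde{f}, T_\gamma f\rangle|^{p'}\le B\|\tilde{f}\|_q^{p'},
\]
which is impossible once $N>B\|\tilde{f}\|_q^{p'}\eps^{-p'}$. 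The contradiction forces $\Gamma$ to be relatively uniformly separated.

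There is no serious obstacle here; the entire content of the argument has been packaged into Lemma \ref{lemma:2}. The only points requiring modest care are: (a) confirming that the $p'$-Bessel condition is quantified over $L^q(\R^d)$ so that translated test functions are legitimate inputs, and (b) producing the companion $\tilde{f}\in L^q$ with nonzero pairing, which is immediate since $f\neq 0$. After that, the contradiction is a one-line comparison of $N\eps^{p'}$ against the uniform Bessel bound $B\|\tilde{f}\|_q^{p'}$.
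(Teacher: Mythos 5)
Your proposal is correct and follows essentially the same route as the paper: fix a dual element $\tilde{f}\in L^q(\R^d)$ with nonzero pairing against $f$ (the paper takes the Hahn--Banach norming functional with $\langle f,\tilde{f}\rangle=\|f\|_p$, you allow any such $\tilde{f}$, which is an immaterial difference), invoke Lemma \ref{lemma:2} to find translated test functions $T_{\beta_N}\tilde{f}$ of fixed $L^q$-norm witnessing at least $N$ pairings above $\eps$, and contradict the uniform $p'$-Bessel bound by comparing $N\eps^{p'}$ with $B\|\tilde{f}\|_q^{p'}$. No gaps; this matches the paper's argument step for step.
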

\begin{proof}
Assume that $\Gamma$ is not relatively uniformly separated. Then for
any $N\in\N$, choose $\varepsilon$ such that $0<\varepsilon<\|f\|_p$.
By Hahn-Banach Theorem, there is an $\tilde{f}\in L^q(\R^d)$ with
$\|\tilde{f}\|_q=1$ such that $\langle
f,\tilde{f}\rangle=\|f\|_p>\varepsilon$. Then, by Lemma
\ref{lemma:2}, there exists $\beta\in\R^d$ such that
$$\#\big\{\gamma\in\Gamma:|\langle T_\gamma f, T_\beta
\tilde{f}\rangle|> \varepsilon\big\}\ge N.$$ Let
$\Gamma_N=\{\gamma\in\Gamma:|\langle T_\gamma f, T_\beta
\tilde{f}\rangle|>\varepsilon\}$. Then, we have
\begin{eqnarray*}
\sum_{\gamma\in\Gamma}|\langle T_\gamma f, T_\beta
\tilde{f}\rangle|^{p'} &\ge&\sum_{\gamma\in\Gamma_N}|\langle
T_\gamma f, T_\beta \tilde{f}\rangle|^{p'}
>N \varepsilon^{p'}.
\end{eqnarray*}
Since $N\in\N$ is arbitrary and
$\|T_\beta\tilde{f}\|_q=\|\tilde{f}\|_q$ is fixed, $T_p(f,\Gamma)$
is not a $p'$-Bessel sequence, which leads to a contradiction. Thus
$\Gamma$ is relatively uniformly separated.
\end{proof}

%\section{Proof of Theorem \ref{th:1} (II)}
The following equivalent form extends Lemma 1 in \cite{NO1} by using
a standard duality argument in Banach spaces.
\begin{lemma}\label{lem:5}
Let $X$ be a separable reflexive Banach space and $1<p,q<\infty$
with $1/p+1/q=1$. A system $\{f_n\}\subset X$ is a $(C_q)$-system in
$X$ with constant $K>0$ if and only if
$$\frac{1}{K}\|h\|\le \Big(\sum_{n=1}^\infty |\langle h,f_n\rangle|^p\Big)^{1/p} \ \ \mbox{ for all }\, h\in X^*.$$
\end{lemma}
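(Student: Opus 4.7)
The plan is to establish the equivalence via two short duality arguments. The forward direction ``$(C_q)$-system $\Rightarrow$ lower $\ell_p$-bound'' follows directly from H\"older's inequality: given $h \in X^*$ and $f \in X$ with $\|f\|\le 1$, any $\varepsilon$-approximation $g = \sum_k a_k f_k$ afforded by the $(C_q)$-system property satisfies $\|a\|_q \le K\|f\| \le K$, so
\[
|\langle h,g\rangle| \le \|a\|_q \Bigl(\sum_k |\langle h,f_k\rangle|^p\Bigr)^{1/p} \le K \Bigl(\sum_k |\langle h,f_k\rangle|^p\Bigr)^{1/p}.
\]
Combined with $|\langle h,f\rangle|\le |\langle h,g\rangle|+\varepsilon\|h\|$, letting $\varepsilon\to 0$ and taking the supremum over $\|f\|\le 1$ gives the desired inequality.

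For the converse I would use a Hahn--Banach separation. Fix $f\in X$ and introduce the balanced convex set
\[
A = \Bigl\{\sum_{k}a_k f_k : a\in c_{00},\ \|a\|_q \le K\|f\|\Bigr\}\subset X,
\]
together with its norm closure $W = \overline{A}$. The $(C_q)$-system property with constant $K$ is precisely the assertion $f\in W$ for every $f\in X$, so it suffices to derive a contradiction from $f\notin W$. In that case Hahn--Banach provides $h\in X^*$ strictly separating $f$ from $W$. The key computation is to identify the resulting separating supremum via $\ell_p$--$\ell_q$ duality as
\[
\sup_{g\in A}\mathrm{Re}\,\langle h,g\rangle = K\|f\|\Bigl(\sum_k |\langle h,f_k\rangle|^p\Bigr)^{1/p},
\]
interpreted as $+\infty$ when $(\langle h,f_k\rangle)_k\notin \ell_p$. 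Because strict separation forces this supremum to be finite, the hypothesis applies and yields $|\langle h,f\rangle| > K\|f\|\cdot \|h\|/K = \|h\|\,\|f\|$, contradicting $|\langle h,f\rangle|\le\|h\|\,\|f\|$.

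The main (mild) subtlety I anticipate is book-keeping the divergent case $(\langle h,f_k\rangle)_k\notin\ell_p$ in the separation step; one disposes of it first by noting that divergence forces $\sup_{g\in A}\mathrm{Re}\,\langle h,g\rangle=+\infty$, which is incompatible with strict separation from a finite value $\mathrm{Re}\,\langle h,f\rangle$, so in the contradiction scenario the series is automatically in $\ell_p$ and the lower bound is applicable. Note that the argument does not actually invoke reflexivity or separability of $X$; these hypotheses enter only through the ambient Definition~\ref{def:1}.
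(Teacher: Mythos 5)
Your proof is correct and follows essentially the same route as the paper: H\"older's inequality for the forward direction, and for the converse a Hahn--Banach separation of $f$ from the closed convex balanced set of $\ell_q$-controlled linear combinations, with the separating supremum identified via $\ell_p$--$\ell_q$ duality. The only differences are cosmetic — the paper truncates to finite sums $\sum_{n=1}^{M}$ and lets $M\to\infty$ where you use the $+\infty$ convention, and its forward direction uses reflexivity to pick a norm-attaining $f$ where you take a supremum over the unit ball, which confirms your closing remark that reflexivity is not actually needed for this lemma.
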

%\begin{lemma}
%A sequence of $\{f_n\}\subset X$ is a $(C_q)$-system in a reflexive Banach space $X$ if and only if
%$$\frac{1}{K}\|h\|\le \Big(\sum |\langle h,f_n\rangle|^p\Big)^{1/p} \ \ \mbox{ for all }\, h\in X^*,$$
%where $K$ is a positive constant not depending on
%$f$.
%\end{lemma}
\begin{proof} For sufficiency, suppose that $\{f_n\}$ is not a $(C_q)$-system in $X$ with
constant $K>0$. Let
\[A:=\Big\{g=\sum a_nf_n: \big(\sum|a_n|^q\big)^{1/q}\le K\Big\}\]
be the set of finite linear combination and $\mathcal{C}$ be the
closure of $A$ in $X.$ It is easy to prove that $\mathcal{C}$ is a
closed convex subset of $X.$ By assumption, $\mathcal{C}$ does not
contain the closed unit ball $B$ of $X$. That is, there exists an $f\in
X$ with $\|f\|\le 1$, and $f$ is not in $\mathcal{C}$. By the
Hahn-Banach theorem, there is an $h\in X^*$ such that $|\langle h,
f\rangle|=1$ and $\sup_{g\in \mathcal{C}} |\langle h, g\rangle|<1.$
Hence, for sufficiently small $\varepsilon>0,$ we have $\sup_{g\in
\mathcal{C}} |\langle h, g\rangle|<1-\varepsilon.$ This implies for
any $M\in\mathbb{N},$
\begin{eqnarray*}
\Big(\sum_{n=1}^{M} |\langle h,f_n\rangle|^p\Big)^{1/p}
&=&\sup_{(\sum_{n=1}^{M} |\alpha_n|^q)^{1/q}\leq 1}
|\sum_{n=1}^{M}\langle h,
f_n\rangle\alpha_n|\\
&=&\frac{1}{K}\sup_{(\sum_{n=1}^{M} |\alpha_n|^q)^{1/q}\leq K}
|\langle h, \sum_{n=1}^{M}\alpha_nf_n\rangle|\\
&=&\frac{1}{K}\sup_{g\in \mathcal{C}} |\langle h, g\rangle|\\
&<&\frac{1}{K}(1-\varepsilon).
\end{eqnarray*}
By the arbitrary of $M,$ we have
\[\Big(\sum_{n=1}^{\infty} |\langle h,f_n\rangle|^p\Big)^{1/p}\leq
\frac{1}{K}(1-\varepsilon)<\frac{1}{K}=\frac{1}{K}|\langle
h,f\rangle|\leq\frac{1}{K}\|h\|,\] which leads to a contradiction.

For necessity, let $\{f_n\}$ be a $(C_q)$-system with constant $K>0$
in $X$. For every $h\in X^*$ and $\varepsilon>0$, there exists an $f\in
X$, $\|f\|=1$, and $|\langle h,f \rangle|=\|h\|$. Choose a linear
combination $g=\sum a_nf_n$ such that $\|f-g\|<\varepsilon$ and
$$\big(\sum|a_n|^q\big)^{1/q}\le K\|f\|=K.$$ Then
\begin{eqnarray*}
\|h\|&=&|\langle h,f \rangle|\\
&\le& |\langle h,f-g \rangle|+|\langle h,g \rangle| \\
&\le& \varepsilon\|h\|+\sum |a_n||\langle h,f_n\rangle|\\
&\le&\varepsilon\|h\|+\Big(\sum |a_n|^q\Big)^{1/q}\Big(\sum|\langle h,f_n\rangle|^p\Big)^{1/p}\\
&\le&\varepsilon\|h\|+K\Big(\sum|\langle h,f_n\rangle|^p\Big)^{1/p}.
\end{eqnarray*}
That is, $$\frac{1-\varepsilon}{K}\|h\|\le\Big(\sum|\langle
h,f_n\rangle|^p\Big)^{1/p}, \quad \forall \, h\in X^*, \ \forall \,
\varepsilon>0.$$ Since $\varepsilon$ is arbitrarily small, take
$\varepsilon\rightarrow0$, we complete the proof.
\end{proof}
%We begin with a useful lemma, which is a generalization of Proposition 2.1 in [SOSZ].

The following result is elementary but very useful.
\begin{lemma}\label{lem:4} Let $1< p<\infty$, $f\in L^p(\R^d)$, and $\Gamma$ be a sequence in $\R^d$.
If $\Gamma$ is relatively uniformly separated, then for all cubes
$Q_h(x),$ for any $x\in\R^d$ and $h>0$, we have

(i) $\sum_{\gamma\in\Gamma} \|\chi_{Q_h(x)}T_\gamma f\|^p_p<\infty.$
%Furthermore, for all $x\in\R^d$,
\ \ \ \ \ (ii) $\sum_{\gamma\in\Gamma} \|\chi_{Q_h(x)}T_\gamma
f\|^p_p\rightarrow0, \ \ \mbox{ as } \, h\rightarrow0.$
\end{lemma}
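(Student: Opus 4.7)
The plan is to convert the sum into a single integral via Tonelli and a change of variables, then bound and dominate the resulting kernel using Lemma \ref{lemma:1}.

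Concretely, the substitution $z = y - \gamma$ gives $\|\chi_{Q_h(x)} T_\gamma f\|_p^p = \int_{Q_h(x-\gamma)} |f(z)|^p \, dz$, so by Tonelli
\[
\sum_{\gamma \in \Gamma} \|\chi_{Q_h(x)} T_\gamma f\|_p^p \;=\; \int_{\R^d} |f(z)|^p \, N_h(z) \, dz,
\]
where $N_h(z) := \#\{\gamma \in \Gamma : z \in Q_h(x - \gamma)\}$. Since $Q_h$ is centered at the origin, $N_h(z)$ equals $\#(\Gamma \cap Q_h(x - z))$ up to boundary ambiguities on a set of $z$ of measure zero. By Lemma \ref{lemma:1}, relative uniform separation is equivalent to a uniform bound on grid-cube counts $\#(\Gamma \cap Q_h(hn))$, and an arbitrary cube $Q_h(y)$ is covered by at most $2^d$ such grid cubes, so there is a finite constant $C_h$ with $N_h(z) \le C_h$ for almost every $z$. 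This immediately yields (i): the left-hand side is at most $C_h \|f\|_p^p < \infty$.

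For (ii), I would apply dominated convergence. Fix any $h_0 > 0$; then $Q_h(x - z) \subset Q_{h_0}(x - z)$ for $h \le h_0$, so $N_h(z) \le C_{h_0}$ uniformly in $h \le h_0$, giving the integrable dominator $C_{h_0} |f(z)|^p$. For the pointwise limit, each $\Gamma_k$ is $\delta_k$-uniformly separated, hence locally finite in $\R^d$, so the finite union $\Gamma$ has no accumulation points in $\R^d$. Consequently, whenever $x - z \notin \Gamma$---that is, outside the countable, hence null, set $x - \Gamma$---there exists $h_1 > 0$ with $Q_{h_1}(x - z) \cap \Gamma = \emptyset$, so $N_h(z) = 0$ for all $h \le h_1$. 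Dominated convergence then delivers (ii).

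The argument is essentially routine; the only step worth pausing on is the symmetry $z \in Q_h(x-\gamma) \iff \gamma \in Q_h(x-z)$, which holds only almost everywhere because of the half-open convention in the definition of $Q_h$, but this boundary issue is harmless inside an integral.
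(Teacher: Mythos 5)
Your proof is correct, and it takes a genuinely different route from the paper's. The paper never forms the counting function $N_h(z)$: instead it splits $\Gamma$ into $\delta_k$-uniformly separated pieces $\Gamma_k$, tiles the fixed cube by small half-open subcubes $Q^+_\varepsilon(\varepsilon a)$ of diameter $\sqrt{d}\,\varepsilon<\delta:=\min_k\delta_k$, and observes that for each $k$ the translated subcubes $Q^+_\varepsilon(\varepsilon a)-\gamma$, $\gamma\in\Gamma_k$, are pairwise disjoint, so the sum of their integrals is at most $\|f\|_p^p$; this gives the explicit bound $n(2N)^d\|f\|_p^p$ for (i), and for (ii) the paper applies dominated convergence to $\chi_{Q^+_\varepsilon(\varepsilon a)-\Gamma_k}|f|^p\le|f|^p$ as $\varepsilon\to0$. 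Your route---Tonelli to obtain $\int|f|^p N_h$, the grid-count characterization of Lemma \ref{lemma:1}(iii) plus a $2^d$-fold covering to get $N_h\le C_h$, and dominated convergence in $h$ using that a relatively uniformly separated set is closed and locally finite, so $N_h(z)\to0$ off the null set $x-\Gamma$---is arguably cleaner, since both (i) and (ii) fall out of a single integral identity, and it invokes the separation only through Lemma \ref{lemma:1}, whereas the paper uses the separation constants directly and thereby avoids the half-open-convention issue you flag. On that point, your caveat is slightly overcautious: the exact identity is $N_h(z)=\#\bigl(\Gamma\cap\widetilde{Q}_h(x-z)\bigr)$ with the oppositely half-open cube $\widetilde{Q}_h$, and since $\widetilde{Q}_h(w)$ lies in a closed cube covered by at most $2^d$ grid cubes, the bound $N_h(z)\le C_h$ holds for \emph{every} $z$, not merely almost every $z$ (the a.e.\ statement is only needed if you insist on the same orientation of half-openness). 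In short: the paper trades your counting function for explicit disjointness bookkeeping; your version buys a uniform treatment of (i) and (ii) at the mild cost of the reflection caveat and the sequential formulation of dominated convergence as $h\to0$, both of which you handle correctly.
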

\begin{proof} (i) Since $\Gamma$ is relatively uniformly separated, it
is a disjoint finite union of $\delta_k$-separated sequences
$\Gamma_k$ for $\delta_k>0$ with $k=1,...,n$. Let
$\delta=\displaystyle{\min_{1\le k\le n}} \delta_k>0$ be the
relatively separated constant and choose
$0<\varepsilon<\delta/\sqrt{d}$. Because any cube $Q_h(x)$ is
bounded, it must be contained in $Q_{2N\varepsilon}$ for some
$N\in\N$. Thus, it is enough to prove that $\sum_{\gamma\in\Gamma}
\|\chi_{Q_{2N\varepsilon}}T_\gamma f\|^p_p<\infty$ for all $N\in\N$.
For any $x\in\R^d$ and $h>0$, let
$$Q^+_h(x)=x+\prod_{j=1}^d[0,h)=\prod_{j=1}^d[x_j,x_j+h).$$
Then
\begin{eqnarray*}
\sum_{\gamma\in\Gamma} \|\chi_{Q_{2N\varepsilon}}T_\gamma
f\|^p_p&=&\sum_{k=1}^n\sum_{\gamma\in\Gamma_k}
\|\chi_{Q_{2N\varepsilon}}T_\gamma f\|^p_p\\
&=&\sum_{k=1}^n\sum_{\gamma\in\Gamma_k}\sum_{a\in Q_{2N}\cap\,\Z^d}
\|\chi_{Q^+_\varepsilon(\varepsilon a)}T_\gamma f\|^p_p\\
&=&\sum_{k=1}^n\sum_{a\in Q_{2N}\cap\,\Z^d}\sum_{\gamma\in\Gamma_k}
\|\chi_{Q^+_\varepsilon(\varepsilon a)}T_\gamma f\|^p_p\\
&=&\sum_{k=1}^n\sum_{a\in Q_{2N}\cap\,\Z^d}\sum_{\gamma\in\Gamma_k}
\|\chi_{Q^+_\varepsilon(\varepsilon a)-\gamma} f\|^p_p\\
&=&\sum_{k=1}^n\sum_{a\in Q_{2N}\cap\,\Z^d}\sum_{\gamma\in\Gamma_k}
\int_{Q^+_\varepsilon(\varepsilon a)-\gamma} |f(x)|^p\, dx.
\end{eqnarray*}
Since $\mathrm{diam} \big(Q^+_\varepsilon(\varepsilon
a)\big)=\sqrt{d}\varepsilon<\delta$ for any $a\in Q_{2N}\cap\,\Z^d$,
we get
$$Q^+_\varepsilon(\varepsilon
a)-\gamma=Q^+_\varepsilon(\varepsilon a-\gamma)$$
are mutually
disjoint for $\gamma\in\Gamma_k$. Thus
\begin{eqnarray}\label{eq:11}
\sum_{\gamma\in\Gamma} \|\chi_{Q_{2N\varepsilon}}T_\gamma f\|^p_p
&=&\sum_{k=1}^n\sum_{a\in Q_{2N}\cap\,\Z^d}\sum_{\gamma\in\Gamma_k}
\int_{Q^+_\varepsilon(\varepsilon a)-\gamma} |f(x)|^p\, dx\\
&\le& \sum_{k=1}^n\sum_{a\in
Q_{2N}\cap\,\Z^d}\|f\|_p^p\nonumber\\
&=&n(2N)^d\|f\|_p^p\nonumber\\
&<&\infty.\nonumber
\end{eqnarray}

(ii) For each $k=1,...,n,$ $a\in Q_{2N}\cap\Z^d$ and fixed
$x\in\R^d,$ we have
\[\chi_{Q^+_\varepsilon(\varepsilon a)-\Gamma_k}|f(x)|^p\rightarrow0
\ \ \ \mbox{as} \ \ \ \ \varepsilon\rightarrow0,\] here
$Q^+_\varepsilon(\varepsilon a)-\Gamma_k=\bigcup_{\gamma\in\Gamma_k}
Q^+_\varepsilon(\varepsilon a)-\gamma.$ Since
\[\chi_{Q^+_\varepsilon(\varepsilon a)-\Gamma_k}|f(x)|^p\leq |f(x)|^p,\]
by the Lebesgue Dominated Convergence Theorem, it follows that
\begin{eqnarray*}
\lim_{\varepsilon\rightarrow0}\sum_{\gamma\in\Gamma_k}\int_{Q^+_\varepsilon(\varepsilon
a)-\gamma} |f(x)|^p\,dx
&=&\lim_{\varepsilon\rightarrow0}\int_{Q^+_\varepsilon(\varepsilon
a)-\Gamma_k} |f(x)|^p\, dx\\
&=&\lim_{\varepsilon\rightarrow0\R^d}\int_{\R^d} \chi_{Q^+_\varepsilon(\varepsilon a)-\Gamma_k}|f(x)|^p\, dx\\
&=&0.
\end{eqnarray*}
Thus by (\ref{eq:11}),
\begin{eqnarray*}
\lim_{\varepsilon\rightarrow0}\sum_{\gamma\in\Gamma}
\|\chi_{Q_{2N\varepsilon}}T_\gamma f\|^p_p
&=& \lim_{\varepsilon\rightarrow0}\sum_{k=1}^n\sum_{a\in
Q_{2N}\cap\,\Z^d}\sum_{\gamma\in\Gamma_k}
\int_{Q^+_\varepsilon(\varepsilon a)-\gamma} |f(x)|^p\,dx\\
&=& \sum_{k=1}^n\sum_{a\in
Q_{2N}\cap\,\Z^d}\lim_{\varepsilon\rightarrow0}\sum_{\gamma\in\Gamma_k}
\int_{Q^+_\varepsilon(\varepsilon a)-\gamma} |f(x)|^p\, dx\\
&=&0.
\end{eqnarray*}
Thus, we obtain that if $\Gamma$ is relatively uniformly separated,
then for any $N\in\mathbb{N},$
\[\lim_{\varepsilon\rightarrow0}\sum_{\gamma\in\Gamma}
\|\chi_{Q_{2N\varepsilon}}T_\gamma f\|^p_p=0.\]
Since for all
$x\in\R^d$, the translation $\Gamma-x=\{\gamma-x:\gamma\in\Gamma\}$
of $\Gamma$ is relatively uniformly separated, then
\begin{eqnarray*}
&&\lim_{h\rightarrow0}\sum_{\gamma\in\Gamma} \|\chi_{Q_h(x)}T_\gamma
f\|^p_p=\lim_{h\rightarrow0}\sum_{\gamma\in\Gamma}
\|\chi_{Q_h}T_{\gamma-x}f\|^p_p=\lim_{h\rightarrow0}\sum_{\gamma\in\Gamma-x}
\|\chi_{Q_h}T_{\gamma} f\|^p_p=0.
\end{eqnarray*}
Now the conclusion follows.
\end{proof}

Now we prove our main result.

\begin{theorem}\label{th:1}
Let $1< p,q<\infty$ with $1/p+1/q=1$ and $n,d\in\N$. For each
$k=1,...,n,$ choose a nonzero function $f_k\in L^p(\R^d)$ and an
arbitrary sequence $\Gamma_k\subset\R^d$. Let $\Gamma$ be the
disjoint union of $\Gamma_1,...,\Gamma_n$.
\begin{enumerate}
\item[$(i)$] If for some $1< p'<\infty$, $\bigcup_{k=1}^n T_p(f_k,\Gamma_k)$ is a
$p'$-Bessel sequence, then $$D^+(\Gamma)<\infty.$$
\item[$(ii)$] If\, $\bigcup_{k=1}^n T_p(f_k,\Gamma_k)$ is a $(C_q)$-system, then $D^+(\Gamma)=\infty.$
\end{enumerate}

In particular, there is no $p'$-Bessel $(C_q)$-system in
$L^p(\R^d)$ of the form $\bigcup_{k=1}^n T_p(f_k,\Gamma_k)$.
%Equivalently, for any $1 < p'<\infty$, there are no $p'$-Bessel
%\emph{$(C_q)$-system} for the whole space
%$L^q(\R^d)$ of the form $\bigcup_{k=1}^n T_p(f_k,\Gamma_k)$.
\end{theorem}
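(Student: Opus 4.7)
The plan is to establish (i) by reducing to the single-generator case already handled in Proposition \ref{pp:1}, and to establish (ii) by contradiction, using the dual characterization from Lemma \ref{lem:5} together with Lemma \ref{lem:4}(ii) applied to test functions that are normalized indicators of small cubes. The concluding ``in particular'' assertion then follows immediately by combining (i) and (ii).

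For part (i), the key observation is that every subfamily of a $p'$-Bessel sequence is itself a $p'$-Bessel sequence with the same bound, since passing to a subfamily only shrinks the sum of $p'$-th powers of coefficients. Hence each $T_p(f_k,\Gamma_k)$ is $p'$-Bessel in $L^p(\R^d)$, and Proposition \ref{pp:1} forces each $\Gamma_k$ to be relatively uniformly separated. A finite union of such sets is again relatively uniformly separated, so Lemma \ref{lemma:1} yields $D^+(\Gamma)<\infty$.

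For part (ii), suppose toward a contradiction that $D^+(\Gamma)<\infty$, so by Lemma \ref{lemma:1}, $\Gamma$ is relatively uniformly separated, and then so is each subset $\Gamma_k$. Lemma \ref{lem:5} applied to $X=L^p(\R^d)$ (so $X^* = L^q(\R^d)$) converts the $(C_q)$-system hypothesis into a lower bound
\[
\frac{1}{K}\|h\|_q\;\le\;\Big(\sum_{k=1}^n\sum_{\gamma\in\Gamma_k}|\langle h, T_\gamma f_k\rangle|^p\Big)^{1/p} \qquad\text{for all } h\in L^q(\R^d).
\]
For fixed $x_0\in\R^d$ and $r>0$, I would test this inequality against $h_r:=\chi_{Q_r(x_0)}/r^{d/q}\in L^q(\R^d)$, which satisfies $\|h_r\|_q=1$. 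Since $h_r$ is supported in $Q_r(x_0)$, Hölder's inequality gives
\[
|\langle h_r, T_\gamma f_k\rangle|\;\le\;\|h_r\|_q\,\|\chi_{Q_r(x_0)}T_\gamma f_k\|_p\;=\;\|\chi_{Q_r(x_0)}T_\gamma f_k\|_p,
\]
and combining the two inequalities yields
\[
\frac{1}{K}\;\le\;\Big(\sum_{k=1}^n\sum_{\gamma\in\Gamma_k}\|\chi_{Q_r(x_0)}T_\gamma f_k\|_p^p\Big)^{1/p}.
\]
Letting $r\to 0$ and invoking Lemma \ref{lem:4}(ii) for each of the $n$ inner sums (which applies because each $\Gamma_k$ is relatively uniformly separated) drives the right-hand side to zero, contradicting $K>0$.

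The main obstacle is selecting the right test functions in (ii): the normalization $h_r=\chi_{Q_r(x_0)}/r^{d/q}$ is designed precisely so that $\|h_r\|_q=1$, keeping the left-hand side of Lemma \ref{lem:5} equal to the fixed positive quantity $1/K$, while the shrinking support lets Hölder and Lemma \ref{lem:4}(ii) jointly push the right-hand side to zero. Once this duality between normalization and localization is set up, the rest of the argument is essentially bookkeeping.
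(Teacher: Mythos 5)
Your proposal is correct and follows essentially the same route as the paper: part (i) by observing that each subfamily $T_p(f_k,\Gamma_k)$ inherits the $p'$-Bessel property and applying Proposition \ref{pp:1} together with Lemma \ref{lemma:1}, and part (ii) by testing the dual inequality of Lemma \ref{lem:5} against indicators of shrinking cubes, using H\"older and Lemma \ref{lem:4}(ii) to force a contradiction. Your explicit normalization $h_r=\chi_{Q_r(x_0)}/r^{d/q}$ is merely a cosmetic variant of the paper's step of dividing the estimate through by $\|\chi_{Q_{2h}}\|_q$.
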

\begin{proof}(i) Suppose that,
for some $1<p'<\infty$, $\bigcup_{k=1}^n T_p(f_k,\Gamma_k)$ is a
$p'$-Bessel sequence. It is equivalent to that each
$T_p(f_k,\Gamma_k)$ is a $p'$-Bessel sequence for $L^q(\R^d)$. Then,
by Proposition \ref{pp:1}, each $\Gamma_k$ is relatively uniformly
separated. By Lemma \ref{lemma:1}, $\Gamma_k$ has finite upper
Beurling density for each $1\le k\le n,$  i.e.
$D^+(\Gamma_k)<+\infty.$ Then by definition we have
\begin{eqnarray*}
\nu_\Gamma^+(h)&=&\sup_{x\in\R^d}\#(\Gamma\cap Q_h(x))\\
&=&\sup_{x\in\R^d}\#(\cup_{k=1}^n(\Gamma_k\cap Q_h(x)))\nonumber\\
&\le&\sum_{k=1}^n\sup_{x\in\R^d}\#(\Gamma_k\cap Q_h(x))\\
&=&\sum_{k=1}^n\nu_{\Gamma_k}^+(h).
\end{eqnarray*}
It follows that
\begin{eqnarray}\label{eq:1}
D^+(\Gamma)&=&\limsup_{h\rightarrow\infty}\frac{\nu_\Gamma^+(h)}{h^d}\nonumber\\
&\le& \limsup_{h\rightarrow\infty}\frac{\sum_{k=1}^n\nu_{\Gamma_k}^+(h)}{h^d}\nonumber\\
&\leq&\sum_{k=1}^n\limsup_{h\rightarrow\infty}\frac{\nu_{\Gamma_k}^+(h)}{h^d}\nonumber\\
&=&\sum_{k=1}^n D^+(\Gamma_k)\nonumber\\
&<&+\infty.
\end{eqnarray}
Thus, $\Gamma$ has finite upper Beurling density.

(ii) Since $\Gamma$ is the disjoint union of sequences $\Gamma_k$,
then, by formula (\ref{eq:1}), we have $ D^+(\Gamma)<\infty$ if and
only if $ D^+(\Gamma_k)<\infty$ for each $k=1,...,n.$ Assume that
$\Gamma$ has finite upper Beurling density. By Lemma \ref{lemma:1},
we know that $\Gamma_k$ is relatively uniformly separated. Now
consider the cube $Q_{2h}=\prod_{j=1}^d[-h,h)$ for $h>0$. Then
\begin{eqnarray*}
\sum_{k=1}^n \sum_{\gamma\in\Gamma_k} |\langle
\chi_{Q_{2h}},T_\gamma f_k \rangle|^p &=&\sum_{k=1}^n
\sum_{\gamma\in\Gamma_k} |\langle
\chi_{Q_{2h}},\chi_{Q_{2h}}T_\gamma f_k \rangle|^p\\
&\le&\sum_{k=1}^n \sum_{\gamma\in\Gamma_k}
\|\chi_{Q_{2h}}\|^p_q\|\chi_{Q_{2h}}T_\gamma f_k\|_p^p\\
&\le&\|\chi_{Q_{2h}}\|^p_q\sum_{k=1}^n
\sum_{\gamma\in\Gamma_k}\|\chi_{Q_{2h}}T_\gamma f_k\|_p^p.
\end{eqnarray*}
By Lemma \ref{lem:4}, we have for each $ k=1,...,n,$
$$\sum_{\gamma\in\Gamma_k}\|\chi_{Q_{2h}}T_\gamma f_k\|_p^p\to 0 \, \mbox{ as } h\to0
.$$ Thus, by Lemma \ref{lem:5}, it is easy to see that $\bigcup_k
T(f_k,\Gamma_k)$ is not a $(C_q)$-system. Thus, we complete the
proof.
\end{proof}

\begin{remark}
(i) The result due to Christensen, Deng and Heil \cite{CDH} is a special
case of Theorem \ref{th:1} for $p=p'=2$.

(ii) As a consequence of
Theorem \ref{th:1}, for no function $g\in L^p(\R^d)$ and no
constants $a,b>0$, $p'>1$ can a collection of functions of the form
$\{T_{na}E_{mb}g\}_{n\in\Z,m=1,...,M}$ be a $p'$-Bessel
$(C_q)$-system in $L^p(\R^d)$, where the modulation operator
$E_{mb}$ on $L^p(\R^d)$ is defined by $$(E_{mb}f)(x)=e^{2\pi i mb
x}f(x).$$ However, Hilbert frames of the infinite type
$\{T_{na}E_{mb}g\}_{m,n\in\Z}$ exist in $L^2(\R)$ (every Hilbert
frame is a Bessel $(C_2)$-system). For more information on Gabor
frames and density theorems, please see \cite{Ch2, He}.
\end{remark}

%The following result on finitely Gabor-type frames in $L^p(\R^d)$ is an immediate corollary of Theorem \ref{th:1}.
%\begin{corollary}
%For any $f\in L^p(\R^d)$ and any constant $a,b>0$,
%
%(i) $\{T_{na}M_{kb}f\}_{n\in\Z,k\in[-K,K]}=\bigcup_{k=-K}^K
%T_p(M_{nb}f,a\cdot\Z)$ can not be a $p'$-Bessel $(C_q)$-system in
%$L^p(\R^d)$ for any $1<p'<\infty$.
%
%(ii)$\{T_{ma}D^n f\}_{m\in\Z,n=1,...,N}=\bigcup_{n=1}^N
%T_p(M_{nb}f,a\cdot\Z)$ can not be a $p'$-Bessel $(C_q)$-system in
%$L^p(\R^d)$ for any $1<p'<\infty$.
%\end{corollary}
%
%However, there exist $p'$-Bessel $(C_q)$-systems in $L^p(\R^d)$ of
%the Gabor-type which is an unconditional basis, that is, the Haar system.

\section{Nonexistence of unconditional bases of
translates in $L^p(\R^d)$}
In this section, we will prove that
there doesn't exist any unconditional basis of the form $\bigcup_{k=1}^n
T_p(f_k,\Gamma_k)$ in $L^p(\R^d)$ for $1<p\le 2$. We use standard
Banach space notations as may be found in \cite{JL,LT}. Background
material on bases, unconditional bases and such can be found there.
For the benefit of those less familiar with these notions we recall
some definitions and facts.

A biorthogonal system is a sequence
$\{x_n,f_n\}\subset X\times X^*$ where $f_n(x_m)=\delta_{nm}.$

$\{x_n\}\subset X$ is a (Schauder) basis for $X$ if for all $x\in X,$
there exists a unique sequence of scalars $\{a_n\}$ so that
$x=\sum_{n=1}^\infty a_n x_n.$ This is equivalent to saying that all
$x_n\neq 0$, $\overline{\mathrm{span}}\{x_n\}=X$ and for some
$K<\infty$, all $m<l$ in $\N$ and all scalars $\{a_n\}_{n=1}^l$,
$$\left\|\sum_{n=1}^m a_nx_n\right\|\le K\left\|\sum_{n=1}^l a_n x_n\right\|.$$ The smallest
such $K$ is the basis constant of $\{x_n\}$.

$\{x_n\}$ is an
unconditional basis for $X$ if for all $x\in X,$ there exists a
unique sequence of scalars $\{a_n\}$ so that $x=\sum_{n=1}^\infty
a_n x_n$ and the convergence is unconditional. i.e.
$x=\sum_{n=1}^\infty a_{\pi(n)}x_{\pi(n)}$ for all permutations
$\pi$ of $\N$.

If $\{x_n\}$ is an unconditional basis for the Banach
space $X$ and $\theta=\{\theta_n\}_{n=1}^\infty$ is a sequence of
$\pm 1$'s, define $S_\theta:X\to X$ by $S_\theta(\sum\alpha_n
x_n)=\sum\theta_n\alpha_n x_n.$ The supremum over all such
$\|S_\theta\|$ is finite, and is called the unconditional constant
of the basis \cite{JL}.

The following lemma is easy to prove, which we leave to interested
readers.
\begin{lemma}\label{lem:3}
Let $X$ be a separable reflexive Banach space with
$\{x_n,f_n\}\subset X\times X^*.$ Assume that
$\{x_n,f_n\}$ is a biorthogonal system, that is, $\langle
x_n, f_m\rangle=\delta_{nm}$ for $n,m\in\N.$ Then $\{x_n\}$
is a seminormalized unconditional basis of $X$ if and only if
$\{f_n\}$ is a seminormalized unconditional basis of $X^*$.
\end{lemma}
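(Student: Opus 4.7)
The plan is to reduce the statement to the classical fact that, in a reflexive Banach space, the coefficient functionals of an unconditional basis form an unconditional basis of the dual, and then verify that seminormalization transfers as well. First assume $\{x_n\}$ is a seminormalized unconditional basis of $X$ with unconditional constant $K$. Since $\{x_n\}$ is a basis, its coefficient functionals are unique, so the biorthogonality forces $\{f_n\}$ to coincide with them. For any finite $A\subset\N$ define the projection $P_A\big(\sum_m a_m x_m\big)=\sum_{m\in A} a_m x_m$, so $\sup_A\|P_A\|\le K$. A direct computation using biorthogonality shows that $P_A^*\phi=\sum_{m\in A}\langle x_m,\phi\rangle f_m$ for every $\phi\in X^*$, and $\|P_A^*\|=\|P_A\|\le K$.

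Taking $A=\{1,\dots,N\}$ yields uniformly bounded partial-sum operators $S_N^*\phi=\sum_{m=1}^N\langle x_m,\phi\rangle f_m$. These fix every $f_k$ once $N\ge k$, so they converge to the identity on $\overline{\mathrm{span}}\{f_n\}$. To extend convergence to all of $X^*$, it suffices to show $\overline{\mathrm{span}}\{f_n\}=X^*$. If this fails, Hahn--Banach supplies a nonzero $\Phi\in X^{**}$ vanishing on every $f_n$; reflexivity then produces $\tilde x\in X$ with $\langle \tilde x,f_n\rangle=0$ for all $n$, but the $f_n$ (being coefficient functionals of a basis) separate points of $X$, forcing $\tilde x=0$, a contradiction. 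Hence $\{f_n\}$ is a Schauder basis of $X^*$, and the uniform bound $\|P_A^*\|\le K$ over all finite $A$ shows it is unconditional with unconditional constant at most $K$.

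For seminormalization, the inequality $1=\langle x_n,f_n\rangle\le\|x_n\|\,\|f_n\|$ gives $\|f_n\|\ge 1/\|x_n\|$, while the rank-one projection $P_{\{n\}}$ has norm $\|x_n\|\,\|f_n\|\le K$, giving $\|f_n\|\le K/\|x_n\|$. If $c\le\|x_n\|\le C$ for all $n$, then $1/C\le\|f_n\|\le K/c$, so $\{f_n\}$ is seminormalized. The converse direction is obtained by applying the forward direction to the biorthogonal system $\{f_n,x_n\}\subset X^*\times X^{**}=X^*\times X$, using that $X^*$ is also reflexive.

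The step that requires the most care is the density $\overline{\mathrm{span}}\{f_n\}=X^*$; this is precisely where reflexivity is essential and cannot be dropped (for instance, the unit vector basis of $\ell_1$ is a seminormalized unconditional basis whose biorthogonal sequence fails to span $\ell_\infty$). Everything else is bookkeeping with the identity $\|P_A^*\|=\|P_A\|$ and the two sandwich estimates on $\|f_n\|$.
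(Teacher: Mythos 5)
The paper offers no proof of Lemma \ref{lem:3} to compare against --- it is stated with the remark that it is ``easy to prove'' and left to interested readers --- so your argument effectively supplies the omitted proof, and it is correct. Your route is the standard duality one: since $\{x_n\}$ is a basis, biorthogonality forces $\{f_n\}$ to be the coefficient functionals; dualizing the finite coordinate projections gives $P_A^*\phi=\sum_{m\in A}\langle x_m,\phi\rangle f_m$ with $\|P_A^*\|=\|P_A\|$, and reflexivity enters exactly where it must, to get $\overline{\mathrm{span}}\{f_n\}=X^*$ (your $\ell_1$ example, whose biorthogonal sequence spans only $c_0$ inside $\ell_\infty$, correctly isolates why this hypothesis cannot be dropped). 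Two small points deserve a line each: first, $\|P_A\|\le K$ is not the definition of the unconditional constant $K=\sup_\theta\|S_\theta\|$, but follows from the averaging identity $P_A=\tfrac12(I+S_\theta)$ with $\theta_n=1$ for $n\in A$ and $\theta_n=-1$ otherwise, so $\|P_A\|\le\tfrac12(1+K)\le K$; second, in the converse you apply the forward direction to the system $\{f_n,x_n\}\subset X^*\times X^{**}$, which requires $X^*$ to be separable and reflexive --- reflexivity is immediate, and separability is automatic either because $\{f_n\}$ is by hypothesis a basis of $X^*$ or because $X^{**}\cong X$ is separable. Neither gap is substantive; your sandwich estimates $1/C\le\|f_n\|\le K/c$ for seminormalization are exactly right.
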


Recall the following known inequalities in $L^p$-space \cite{AO}.
For $1<p<\infty$, there exist constants $A_p$, $B_p>0$ such that, if
$\{f_k\}_{k=1}^{\infty}$ is a normalized $C$-unconditional basic
sequence in $L^p(\R^d)$, then
\begin{equation}\label{eq:3}
(C A_p)^{-1}\big(\sum_{k=1}^{\infty}
|a_k|^2\big)^{1/2}\le\big\|\sum_{k=1}^{\infty}  a_k f_k\big\|_p \le
C \big(\sum_{k=1}^{\infty}|a_k|^p\big)^{1/p}, \, \mbox{ if } \,
1<p\le 2,\end{equation}
\begin{equation}\label{eq:4}
C^{-1}\big(\sum_{k=1}^{\infty}|a_k|^p\big)^{1/p}\le\big\|\sum_{k=1}^{\infty}
a_k f_k\big\|_p \le C B_p \big(\sum_{k=1}^{\infty}
|a_k|^2\big)^{1/2}, \, \mbox{ if } \, 2\le p<\infty.\end{equation}
%So we have the following
%holds:
%\begin{lemma}\edz{delete it. it is not worth emphrasing}
%$\{f_n\}$ is a semi-normalized $\lambda$-unconditional basic
%sequence in $L^p(\R^d)$ for $1<p<\infty$, then we have
%\begin{equation}\lambda^{-1}\inf_n\|f_n\|_p\left(\sum |a_n|^p\right)^{1/p}\le\left\|\sum a_n
%f_n\right\|_p \le \lambda B_p \sup_n\|f_n\|_p \left(\sum
%|a_n|^2\right)^{1/2}, \ \ \mbox{ if } \, 2\le
%p<\infty,\end{equation}
%\begin{equation}(\lambda A_p)^{-1}\inf_n\|f_n\|_p\left(\sum |a_n|^2\right)^{1/2}\le\left\|\sum a_n
%f_n\right\|_p \le \lambda \sup_n\|f_n\|_p\left(\sum
%|a_n|^p\right)^{1/p}, \ \ \mbox{ if } \, 1<p\le 2.\end{equation}
%\end{lemma}
\begin{proposition} \label{th:6}
Given $p,q\in(1,\infty)$ with $1/p+1/q=1$. Then
\begin{enumerate}
\item[$(i)$] If $1<p\le 2$, then every seminormalized unconditional
basis of $L^p(\R^d)$ is a $q$-Bessel $(C_2)$-system.
\item[$(ii)$] If $2\le p<\infty$, then every seminormalized unconditional
basis of $L^p(\R^d)$ is a Bessel $(C_p)$-system.
\end{enumerate}
\end{proposition}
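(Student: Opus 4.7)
The plan is to reduce both (i) and (ii) to the inequalities (\ref{eq:3}) and (\ref{eq:4}) applied to the \emph{dual} basis living in $L^q(\R^d)$, using Lemma \ref{lem:3} to transfer the unconditional basis structure to the dual and Lemma \ref{lem:5} to recast the $(C_q)$-system property as a lower $p$-frame inequality on $L^q(\R^d)$.

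First I would set up the dual basis. Let $\{f_k\}$ be a seminormalized $C$-unconditional basis of $L^p(\R^d)$ and let $\{g_k\}\subset L^q(\R^d)$ be its biorthogonal sequence of coefficient functionals. Since $L^p(\R^d)$ is reflexive, Lemma \ref{lem:3} yields that $\{g_k\}$ is a seminormalized unconditional basis of $L^q(\R^d)$; in particular there exist $0<c\le\|g_k\|_q\le c'$ and a uniform unconditional constant $C'$ for the normalized basis $\tilde g_k:=g_k/\|g_k\|_q$ in $L^q(\R^d)$. Every $h\in L^q(\R^d)$ then admits the dual expansion
\[
 h=\sum_{k=1}^\infty \langle h,f_k\rangle\, g_k=\sum_{k=1}^\infty a_k\,\tilde g_k,\qquad a_k:=\langle h,f_k\rangle\,\|g_k\|_q.
\]

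Next I would apply the appropriate inequality among (\ref{eq:3})--(\ref{eq:4}) to the normalized unconditional basic sequence $\{\tilde g_k\}$ in $L^q(\R^d)$. In case (i), $1<p\le 2$ forces $q\ge 2$, so (\ref{eq:4}) applies in $L^q(\R^d)$: its lower $\ell^q$-bound, together with $\|g_k\|_q\ge c$, yields $\bigl(\sum|\langle h,f_k\rangle|^q\bigr)^{1/q}\le B\|h\|_q$, the $q$-Bessel inequality for $\{f_k\}$, while its upper $\ell^2$-bound, together with $\|g_k\|_q\le c'$, yields $\|h\|_q\le K\bigl(\sum|\langle h,f_k\rangle|^2\bigr)^{1/2}$, which by Lemma \ref{lem:5} is exactly the $(C_2)$-system condition for $\{f_k\}$. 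In case (ii), $2\le p<\infty$ forces $q\le 2$, so (\ref{eq:3}) applies in $L^q(\R^d)$: its lower $\ell^2$-bound yields the Bessel (i.e.\ $2$-Bessel) inequality for $\{f_k\}$, and its upper $\ell^q$-bound yields $\|h\|_q\le K\bigl(\sum|\langle h,f_k\rangle|^q\bigr)^{1/q}$, which by Lemma \ref{lem:5} (applied with the roles of $p$ and $q$ swapped, noting $1/p+1/q=1$) is exactly the $(C_p)$-system condition for $\{f_k\}$.

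The main point requiring care is simply bookkeeping: one must match the dichotomy $q\ge 2$ vs.\ $q\le 2$ in $L^q(\R^d)$ with the statement's dichotomy $p\le 2$ vs.\ $p\ge 2$, and verify that the two-sided seminormalization $c\le\|g_k\|_q\le c'$ absorbs the factors $\|g_k\|_q$ in the correct direction on each side of the inequalities. Once this is kept straight, both (i) and (ii) follow immediately from the standard unconditional-basis inequalities (\ref{eq:3})--(\ref{eq:4}) together with the duality provided by Lemmas \ref{lem:3} and \ref{lem:5}; there is no genuinely harder step.
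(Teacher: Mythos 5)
Your proposal is correct and follows essentially the same route as the paper's proof: pass to the biorthogonal system via Lemma \ref{lem:3}, apply the unconditional-basis inequalities (\ref{eq:3}) and (\ref{eq:4}) to the normalized dual basis in $L^q(\R^d)$, with the two-sided seminormalization bounds absorbing the factors $\|g_k\|_q$ exactly as you describe, and read off the $(C_2)$- resp.\ $(C_p)$-system property from the resulting lower frame inequality via Lemma \ref{lem:5}. The only cosmetic difference is that you invoke Lemma \ref{lem:5} explicitly at the end, whereas the paper leaves that final translation implicit.
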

\begin{proof}
Let $\{f_i\}$ be a seminormalized unconditional basis of
$L^p(\R^d)$, and $\{\tilde{f}_i\}\subset L^q(\R^d)$ be the
biorthogonal functionals of $\{f_i\}$. Then, by Lemma \ref{lem:3},
$\{\tilde{f}_i\}$ is a seminormalized $C$-unconditional basis of
$L^q(\R^d)$. Let $C_1=\inf\|\tilde{f}_i\|_p$ and $C_2=\sup\|\tilde{f}_i\|_q$. %Moreover, since $f=\sum_i \langle f, \tilde{f}_i\rangle
%f_i=\sum_i (f_i\otimes \tilde{f}_i)f$ for all $f\in L^p(\R^d)$, we
%obtain that $0<\sup_i \|f_i\otimes \tilde{f}_i\|_{L^p\rightarrow
%L^p}=\sup_i \|f_i\|_p\|\tilde{f}_i\|_q<\infty.$ Then
%$$\|\tilde{f}_i\|_q\le \frac{\sup_i \|f_i\|_p\|\tilde{f}_i\|_q}{\inf_i \|f_i\|_p}$$
%for all $i\in\N$. Furthermore, since $\langle f_i,
%\tilde{f}_i\rangle=1$ for all $i\in\N$, it implies that
%$$\|\tilde{f}_i\|_q\ge \frac{1}{\sup_i \|f_i\|_p}$$
%for all $i\in\N$. Thus, $\{\tilde{f}_i\}$ is semi-normalized.

We first prove (i). Since $1<p\le 2$, we have $2\le q<\infty.$ By
inequality (\ref{eq:4}), for all $\tilde{f}\in L^q(\R^d)$,
\begin{eqnarray*}
\big(\sum |\langle \tilde{f},f_i\rangle|^q\big)^{1/q}&=&
\Big(\sum \frac{1}{\|\tilde{f}_i\|_q^q}|\langle
\tilde{f},\|\tilde{f}_i\|_q f_i \rangle|^q\Big)^{1/q}\\
&\le& \frac{1}{C_1}\big(\sum |\langle \tilde{f},\|\tilde{f}_i\|_q f_i \rangle|^q\big)^{1/q}\\
&\le& \frac{C}{C_1}\Big\|\sum \langle \tilde{f},\|\tilde{f}_i\|_q
f_i\rangle \frac{\tilde{f}_i}{\|\tilde{f}_i\|_q}\Big\|_q\\
&=&\frac{C}{C_1}\big\|\sum \langle \tilde{f}, f_i\rangle \tilde{f}_i\big\|_q\\
&=&\frac{C}{C_1}\|\tilde{f}\|_q.
\end{eqnarray*}
Moreover, for the lower 2-frame bound, we have
\begin{eqnarray*}
\big(\sum |\langle \tilde{f},f_i\rangle|^2\big)^{1/2}&=&
\Big(\sum \frac{1}{\|\tilde{f}_i\|_q^2}|\langle
\tilde{f},\|\tilde{f}_i\|_q f_i \rangle|^2\Big)^{1/2}\\
&\ge& \frac{1}{C_2}\big(\sum |\langle \tilde{f},\|\tilde{f}_i\|_q f_i \rangle|^2)^{1/2}\\
&\ge& \frac{1}{B_qCC_2}\Big\|\sum \langle
\tilde{f},\|\tilde{f}_i\|_q f_i\rangle
\frac{\tilde{f}_i}{\|\tilde{f}_i\|_q}\Big\|_q\\
&=&\frac{1}{B_qCC_2}\big\|\sum \langle \tilde{f}, f_i\rangle \tilde{f}_i\big\|_q\\
&=&\frac{1}{B_qCC_2}\|\tilde{f}\|_q^2.
\end{eqnarray*}

Now we prove (ii). Similarly, by inequality (\ref{eq:3}), for all
$\tilde{f}\in L^q(\R^d)$, we get that
$$\big(\sum |\langle \tilde{f},f_i\rangle|^2\big)^{1/2}\le \frac{CA_q}{C_1}\|\tilde{f}\|_q.$$
For the lower $q$-frame bound, we have
$$
\big(\sum |\langle
\tilde{f},f_i\rangle|^q\big)^{1/q}\ge\frac{1}{CC_2}\|\tilde{f}\|_q.
$$
Thus, we complete the proof.
\end{proof}

The following is the main result in this section.
\begin{theorem} Let $1< p\le 2$ and $n,d\in\N$. For each
$k=1,...,n,$ choose a nonzero function $f_k\in L^p(\R^d)$ and an
arbitrary sequence $\Gamma_k\subset\R^d$. Let $\Gamma$ be the
disjoint union of $\Gamma_1,...,\Gamma_n$. Then $\bigcup_{k=1}^n
T_p(f_k,\Gamma_k)$ can at most be an unconditional basis for a
proper subspace of $L^p(\R^d)$.

Equivalently, there is no unconditional basis of $L^p(\R^d)$ of the
form $\bigcup_{k=1}^n T_p(f_k,\Gamma_k)$.
\end{theorem}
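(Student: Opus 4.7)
The plan is to argue by contradiction, splicing Proposition \ref{th:6}(i) into Theorem \ref{th:1} via the elementary sequence inclusion $\ell^2\hookrightarrow\ell^q$ that is available precisely when $q\ge 2$. Suppose, toward a contradiction, that $\bigcup_{k=1}^n T_p(f_k,\Gamma_k)$ is an unconditional basis of $L^p(\R^d)$. My first step is to check that this basis is automatically seminormalized, so that Proposition \ref{th:6} applies: translation is an isometry on $L^p(\R^d)$, so $\|T_\gamma f_k\|_p=\|f_k\|_p$ for every $\gamma$, and the basis vectors have norms drawn from the finite nonzero set $\{\|f_1\|_p,\dots,\|f_n\|_p\}$, hence bounded above and away from zero.

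With seminormalization verified and the standing hypothesis $1<p\le 2$, Proposition \ref{th:6}(i) identifies the basis as a $q$-Bessel $(C_2)$-system, where $1/p+1/q=1$. The next move is to upgrade the $(C_2)$ control to $(C_q)$ control, which is exactly where the restriction $p\le 2$ (equivalently $q\ge 2$) enters: for any scalar sequence $(a_i)$ one has $\bigl(\sum|a_i|^q\bigr)^{1/q}\le\bigl(\sum|a_i|^2\bigr)^{1/2}$, so any approximating linear combination whose coefficients are $\ell^2$-controlled by $C\|f\|_p$ is a fortiori $\ell^q$-controlled by $C\|f\|_p$. Consequently $\bigcup_{k=1}^n T_p(f_k,\Gamma_k)$ is simultaneously a $p'$-Bessel sequence (take $p'=q\in(1,\infty)$) and a $(C_q)$-system.

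This directly contradicts Theorem \ref{th:1}, which forbids any $p'$-Bessel $(C_q)$-system of the form $\bigcup_{k=1}^n T_p(f_k,\Gamma_k)$, and both equivalent formulations of the statement follow at once. The density and duality heavy lifting has already been carried out in Sections 2--3 and in Proposition \ref{th:6}, so I do not expect a substantive technical obstacle beyond this assembly. The one point requiring care is the role of the hypothesis $p\le 2$: it is exactly what orients the inclusion $\ell^2\hookrightarrow\ell^q$ in the useful direction. In the reflexive range $2<p<\infty$, Proposition \ref{th:6}(ii) only supplies a $(C_p)$-system and the needed inclusion $\ell^p\subset\ell^q$ fails, which is why this line of argument does not immediately extend and why the theorem is stated only for $1<p\le 2$.
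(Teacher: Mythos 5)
Your proposal is correct and follows essentially the same route as the paper's own proof: invoke Proposition \ref{th:6}(i) to get a $q$-Bessel $(C_2)$-system, use the inequality $\bigl(\sum|a_n|^q\bigr)^{1/q}\le\bigl(\sum|a_n|^2\bigr)^{1/2}$ (valid since $q\ge 2$ when $1<p\le 2$) to upgrade it to a $(C_q)$-system, and contradict Theorem \ref{th:1}. Your explicit verification that the translates are seminormalized (translation is an isometry and the $f_k$ are nonzero) fills in a hypothesis the paper applies silently, but it does not alter the argument.
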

\begin{proof}
If $\bigcup_{k=1}^n T_p(f_k,\Gamma_k)$ is an unconditional basis of
$L^p(\R^d)$, then, by Proposition \ref{th:6}, it is a $q$-Bessel
$(C_2)$-system for $L^p(\R^d)$. Since $1<p\le 2$, $2\le q<\infty$
with $1/p+1/q=1$, we have $(\sum |a_n|^q)^{1/q}\le (\sum
|a_n|^2)^{1/2}$. Then, by (\ref{eq:5}) in Definition \ref{def:1},
$\bigcup_{k=1}^n T_p(f_k,\Gamma_k)$ is a $q$-Bessel $(C_q)$-system
for the whole $L^p(\R^d)$. It leads to a contradiction by Theorem
\ref{th:1}.
\end{proof}

{\bf Acknowledgement.} This work was partially done when the second
author visited Departments of Mathematics in Texas A\&M University
and the University of Texas at Austin. The second author would like
to thank David Larson, Edward Odell and Thomas Schlumprecht for the
invitation and great help.

Both authors also expresses their appreciation to Qiyu Sun and
Wenchang Sun for very helpful comments.

\bibliographystyle{amsplain}

\end{document}